\documentclass[12pt]{article}%

\usepackage{amsmath,enumerate}
\usepackage{amsfonts}
\usepackage{amssymb}
\usepackage{color}

\setlength{\topmargin}{-.5in}
\setlength{\textheight}{9in}
\setlength{\oddsidemargin}{.125in}
\setlength{\textwidth}{6.25in}

\setcounter{MaxMatrixCols}{30}
\newtheorem{theorem}{Theorem}[section]

\newtheorem{corollary}[theorem]{Corollary}

\newtheorem{lemma}[theorem]{Lemma}

\newenvironment{proof}[1][Proof]{\noindent\textbf{#1.} }
{\hfill \ \rule{0.5em}{0.5em}}

\begin{document}

\title{Small dense subgraphs of polarity graphs and the extremal number for the 4-cycle}
\author{Michael Tait\thanks{Department of Mathematics, University of California San Diego, \texttt{mtait@math.ucsd.edu}} \and Craig Timmons\thanks{Department of Mathematics and Statistics, California State University Sacramento, \texttt{craig.timmons@csus.edu} }}
\date{}
\maketitle
\vspace{-5mm}
\begin{abstract}
In this note, we show that for any $m \in \{1,2, \dots , q +1 \}$, if $G$ is a polarity graph of a projective plane of order $q$ that has an oval, then $G$ contains a subgraph on $m + \binom{m}{2}$ vertices with  
$m^2+\frac{m^4}{8q} - O (  \frac{m^4}{q^{3/2} } +m )$ edges. 
As an application, we give the best known lower bounds on the Tur\'{a}n number $\mathrm{ex}(n, C_4)$ for certain values of $n$. In particular, we disprove a conjecture of Abreu, Balbuena, and Labbate concerning $\mathrm{ex}(q^2-q-2, C_4)$ where $q$ is a power of $2$.
\end{abstract}


\section{Introduction}

Let $F$ be a graph.  A graph $G$ is said to be $F$-\emph{free} if $G$ does not contain $F$ as a subgraph.  Let $\textup{ex}(n , F)$ denote the \emph{Tur\'{a}n number} of $F$, which is the maximum number of edges in an $n$-vertex $F$-free graph.  Write $\textup{Ex}(n  , F)$ for the family of $n$-vertex graphs that are $F$-free and have $\textup{ex}(n , F)$ edges.  Graphs in the family $\textup{Ex}(n , F)$ are called \emph{extremal graphs}.  Determining $\textup{ex}( n , F)$ for different graphs $F$ is one of the most well-studied problems in extremal graph theory.  A case of particular interest is when $F = C_4$, the cycle on four vertices.  A well known result of K\H{o}vari, S\'{o}s, and Tur\'{a}n \cite{kst} implies that $\textup{ex}(n , C_4) \leq \frac{1}{2} n^{3/2} + \frac{1}{2}n$.  Brown \cite{b}, and Erd\H{o}s, R\'{e}nyi, and S\'{o}s \cite{ers} proved that 
$\textup{ex}( q^2 + q + 1 , C_4) \geq \frac{1}{2} q (q + 1)^2$ whenever $q$ is a power of a prime.  It follows that 
$\textup{ex}(n , C_4) = \frac{1}{2} n^{3/2} + o( n^{3/2} )$.  For more on Tur\'{a}n numbers of bipartite graphs, we recommend the survey of F\"{u}redi and Simonovits \cite{fs}.  

The $C_4$-free graphs constructed in \cite{b} and \cite{ers} are examples of polarity graphs.  To define these graphs, we introduce some ideas from finite geometry.  Let $\mathcal{P}$ and $\mathcal{L}$ be disjoint, finite sets, and let $\mathcal{I}\subset \mathcal{P}\times \mathcal{L}$. We call the triple $(\mathcal{P}, \mathcal{L}, \mathcal{I})$ a {\em finite geometry}.  The elements of $\mathcal{P}$ are called {\em points}, and the elements of $\mathcal{L}$ are called {\em lines}.   A {\em polarity} of the geometry is a bijection from $\mathcal{P}\cup \mathcal{L}$ to $\mathcal{P}\cup \mathcal{L}$ that sends points to lines, sends lines to points, is an involution, and respects the incidence structure.  Given a finite geometry $(\mathcal{P}, \mathcal{L}, \mathcal{I})$ and a polarity $\pi$, the {\em polarity graph} $G_\pi$ is the graph with vertex set $V(G_\pi) = \mathcal{P}$ and edge set
\[
E(G_\pi) = \{\{p,q\}: p,q\in \mathcal{P}, (p, \pi(q))\in \mathcal{I}\}.
\]
\medskip
Note that $G_\pi$ will have loops if there is a point $p$ such that $(p, \pi(p))\in \mathcal{I}$. 
Such a point is called an {\em absolute point}. We will work with polarity graphs that have loops, and graphs obtained from polarity graphs by removing the loops.  A case of particular interest is when the geometry is 
the Desarguesian projective plane $PG(2,q)$.   For a prime power $q$, this is the plane obtained by considering the one-dimensional subspaces of $\mathbb{F}_q^3$ as points, the two-dimensional subspaces as lines, and incidence is defined by inclusion. A polarity of $PG(2,q)$ is given by sending points and lines to their orthogonal complements. The polarity graph obtained from $PG(2,q)$ with this polarity is often called the {\em Erd\H{o}s-R\'{e}nyi orthogonal polarity graph} and is denoted $ER_q$.  This is the graph that was constructed in \cite{b, ers} and we recommend \cite{bs} for a detailed study of this graph.  

Our main theorem will apply to $ER_q$ as well as to other polarity graphs that come from projective planes that contain an oval.  
An \emph{oval} in a projective plane of order $q$ is a set of $q+1$ points, no three of which are collinear.  It is known that $PG(2,q)$ always contains ovals.  One example is the set of $q+1$ points 
\[
\{(1 , t , t^2 ) : t \in \mathbb{F}_q \} \cup \{ (0,1,0) \}
\]
which form an oval in $PG(2,q)$.  There are also non-Desaurgesian planes that contain ovals.  We now state our main theorem.  

\begin{theorem}\label{dense subgraph}
Let $\Pi$ be a projective plane of order $q$ that contains an oval and has a polarity $\pi$.  If $m \in \{1,2, \dots , q + 1 \}$, then the polarity graph $G_{ \pi}$ contains a subgraph on at most $m + \binom{m}{2}$ vertices that has at least 
\[
2 \binom{m}{2} + \frac{m^4}{8q} - O \left( \frac{m^4}{q^{3/2} } + m \right)
\]
edges.  
\end{theorem}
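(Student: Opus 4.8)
The plan is to build the dense subgraph explicitly from an oval $O = \{P_0, P_1, \dots, P_q\}$ in $\Pi$. The key insight driving the construction is that the polarity $\pi$ sends the oval points to a set of lines (the "polar lines"), and I want to select a subset of $m$ oval points and look at the intersection points of their polar lines. Concretely, fix $m$ points $P_{i_1}, \dots, P_{i_m}$ of the oval and let $\ell_j = \pi(P_{i_j})$ be the corresponding tangent/secant lines. These $m$ lines pairwise intersect in $\binom{m}{2}$ points (since no two distinct lines in a projective plane meet in more than one point), and I expect these intersection points to be distinct for generic choices, giving roughly $\binom{m}{2}$ new vertices. Together with the $m$ oval points themselves, this yields a vertex set of size at most $m + \binom{m}{2}$, matching the bound in the statement.

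First I would count the "automatic" edges: in the polarity graph $G_\pi$, the intersection point $q_{jk} = \ell_j \cap \ell_k$ lies on both $\pi(P_{i_j})$ and $\pi(P_{i_k})$, which by the definition $E(G_\pi) = \{\{p,q\} : (p, \pi(q)) \in \mathcal{I}\}$ means $q_{jk}$ is adjacent to both $P_{i_j}$ and $P_{i_k}$. So each of the $\binom{m}{2}$ intersection points contributes at least $2$ edges to the oval vertices, giving the leading $2\binom{m}{2}$ term. The remaining task is to show that the intersection points have many edges \emph{among themselves}, and this is where the $\frac{m^4}{8q}$ term must come from: there are $\binom{\binom{m}{2}}{2} \approx \frac{m^4}{8}$ pairs of intersection points, and I expect a $1/q$-fraction of them to be adjacent in $G_\pi$ (heuristically, two random points in a projective plane of order $q$ are adjacent with probability about $1/q$, since each point has degree roughly $q$ among $q^2$ vertices).

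The second half of the proof — establishing that the intersection points span about $\frac{m^4}{8q}$ edges rather than merely being pairwise nonadjacent — is the main obstacle. I would attack it by a counting or second-moment argument: for each oval point $P_s$ not among the chosen $m$, its polar line $\pi(P_s)$ passes through some number of the intersection points $q_{jk}$, and any two intersection points lying on a common polar line $\pi(P_s)$ are adjacent to $P_s$, hence to each other's "witnesses." More precisely, I would count incidences between the $\binom{m}{2}$ intersection points and the full set of $q+1$ polar lines of the oval, then use convexity (Cauchy–Schwarz) to lower-bound the number of \emph{collinear pairs} of intersection points lying on a common polar line; each such collinear pair yields an edge. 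The error term $O(m^4/q^{3/2} + m)$ should absorb (i) the overcounting when intersection points coincide or lie on the oval, (ii) the deviation of the incidence count from its expected value, and (iii) boundary contributions from absolute points and loops.

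The delicate step will be controlling the distinctness and genericity of the $\binom{m}{2}$ intersection points and ensuring the Cauchy–Schwarz lower bound is tight enough to produce the precise constant $\frac{1}{8}$ in $\frac{m^4}{8q}$; I anticipate that the oval's defining property (no three points collinear, so the polar lines are in "general position" in the dual sense) is exactly what guarantees the intersection points are spread out enough for the incidence count to concentrate, and that the constant $1/8$ arises from $\frac{1}{2}\binom{m}{2}^2 \cdot \frac{1}{q} \approx \frac{m^4}{8q}$ after accounting for the average number of polar lines through each intersection point.
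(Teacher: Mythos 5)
Your construction is the same as the paper's: the $m$ chosen oval points together with the pairwise intersections of their polar lines (equivalently, the poles of their $\binom{m}{2}$ secants), and your count of the $2\binom{m}{2}$ cross edges is correct. The gap is in the mechanism you propose for the main term $m^4/(8q)$. In a polarity graph, $u$ is adjacent to $v$ if and only if $u$ lies on the line $\pi(v)$; hence two intersection points are adjacent if and only if one of them lies on the \emph{secant} line that is the polar image of the other. Two intersection points lying on a common polar line $\pi(P_s)$ of a third oval point are merely both adjacent to $P_s$ --- that is a path of length two through $P_s$, not an edge between them. Worse, the incidence count you propose is degenerate: since $\pi(\ell_j \cap \ell_k)$ is the secant through $P_{i_j}$ and $P_{i_k}$, and a secant meets the oval in exactly those two points, the intersection point $\ell_j \cap \ell_k$ lies on exactly two of the $q+1$ polar lines, namely $\ell_j$ and $\ell_k$. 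So the incidences you would count total exactly $2\binom{m}{2}$, the collinear pairs they generate number about $m^3/2$ (pairs $\ell_j\cap\ell_k$, $\ell_j\cap\ell_{k'}$ sharing the line $\ell_j$), and none of these pairs is an edge. No Cauchy--Schwarz manipulation of these quantities can produce the $m^4/(8q)$ term.

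The fix is also not simply to swap in the correct lines. If you instead count incidences between the $\binom{m}{2}$ intersection points and the $\binom{m}{2}$ secants --- which, up to a factor of $2$ and loop corrections, are exactly the edges you want --- you need a \emph{lower} bound on that incidence count, and convexity arguments do not provide one. The only available lower bound is spectral (the paper's Lemma 2.2, i.e.\ expander mixing), but applied to a set of $\approx m^2/2$ vertices its error term is $\Theta(\sqrt{q}\,m^2)$, which swamps $m^4/(8q)$ unless $m \gtrsim q^{3/4}$; in particular it gives nothing for $m \approx 2\sqrt{q}$, the regime needed for Corollary 1.2, and the paper explicitly remarks that Lemma 2.2 is useless when the vertex set has size $O(q)$. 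The idea your outline is missing is the paper's two-step argument: first apply the spectral bound to the set $X_H$ of \emph{all} $\approx \binom{q+1}{2}$ secant poles of the full oval, where the main term dominates and one gets $e(G[X_H]) \geq q^3/8 - O(q^{5/2})$; then choose the $m$ oval points uniformly at random, noting that an edge $uv$ of $G[X_H]$ survives in the sampled induced subgraph exactly when the at most four points of $(\Gamma(u)\cap H)\cup(\Gamma(v)\cap H)$ are all chosen, which happens with probability at least $m(m-1)(m-2)(m-3)/q^4$. Linearity of expectation then transfers the density of the big set down to the small one and yields the stated error $O(m^4/q^{3/2}+m)$, which is nontrivial already for $m \gg q^{1/3}$. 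Without this sampling step, or a substitute of comparable strength, the claimed bound is out of reach.
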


Theorem \ref{dense subgraph} allows us to obtain the best-known lower bounds for $\mathrm{ex}(n, C_4)$ for certain values of $n$ by taking the graph $ER_q$ and removing a small subgraph that has many edges.  All of the best lower bounds in the current literature are obtained using this technique (see \cite{abl, fknw, tt}).  An open conjecture of McCuaig is that 
any graph in $\textup{Ex}(n , C_4)$ is an induced subgraph of some orthogonal polarity graph (cf \cite{fu1994}).  For $q \geq 15$ a prime power, F\"{u}redi \cite{fu1996} proved that any graph in $\textup{Ex}( q^2 + q + 1 , C_4)$ is an orthogonal polarity graph of some projective plane of order $q$.  For some recent progress on this problem, see \cite{fknw}.
By considering certain induced subgraphs of $ER_q$, Abreu, Balbuena, and Labbate \cite{abl} proved that
\[
\mathrm{ex}(q^2-q-2, C_4) \geq \frac{1}{2}q^3 - q^2
\]
whenever $q$ is a power of 2.  They conjectured that this lower bound is best possible.  
Using Theorem \ref{dense subgraph}, we answer their conjecture in the negative.

\begin{corollary}\label{abl conjecture}
If $q$ is a prime power, then  
\[
\mathrm{ex}(q^2 - q - 2, C_4) \geq \frac{1}{2}q^3 - q^2 + \frac{3}{2}q - O\left(q^{1/2}\right).
\]
\end{corollary}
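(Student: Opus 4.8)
The plan is to obtain the bound constructively: start from the Erd\H{o}s--R\'enyi orthogonal polarity graph $ER_q$, which is $C_4$-free, delete a set $S$ of $2q+3$ vertices so that what remains has exactly $q^2-q-2$ vertices, and choose $S$ so that it spans many edges, thereby destroying as few edges as possible. Recall the standard facts that $ER_q$ has $q^2+q+1$ vertices, is $C_4$-free once its loops are deleted, has maximum degree $q+1$, and has exactly $\frac12 q(q+1)^2 = \frac12 q^3 + q^2 + \frac12 q$ edges. Since $PG(2,q)$ contains an oval, Theorem~\ref{dense subgraph} applies to $ER_q$.

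First I would let $m$ be the largest integer with $m + \binom{m}{2} \le 2q+3$; solving $\frac{m(m+1)}{2} \le 2q+3$ gives $m = 2\sqrt{q} + O(1)$, and in particular $m \le q+1$ so the theorem applies. It yields a subgraph $H$ of $ER_q$ with $|V(H)| \le 2q+3$ and
\[
e(H) \ge 2\binom{m}{2} + \frac{m^4}{8q} - O\!\left( \frac{m^4}{q^{3/2}} + m \right).
\]
Substituting $m = 2\sqrt{q} + O(1)$ gives $2\binom{m}{2} = 4q + O(q^{1/2})$ and $\frac{m^4}{8q} = 2q + O(q^{1/2})$, while the error term is $O(q^{1/2})$; hence $e(H) \ge 6q - O(q^{1/2})$.

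Next I would extend $V(H)$ to a vertex set $S$ with $|S| = 2q+3$ (possible because $|V(H)| \le 2q+3 \le q^2+q+1$) and set $G = ER_q - S$. Then $G$ is $C_4$-free, has $(q^2+q+1)-(2q+3) = q^2-q-2$ vertices, and the number of deleted edges equals $\sum_{v\in S}\deg(v) - e(S)$. Bounding $\deg(v)\le q+1$ for every vertex and $e(S)\ge e(H)$ yields
\[
e(G) \ge \tfrac12 q(q+1)^2 - (q+1)(2q+3) + e(H) \ge \tfrac12 q^3 - q^2 + \tfrac32 q - O(q^{1/2}),
\]
which gives the stated lower bound on $\mathrm{ex}(q^2-q-2,C_4)$. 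One could additionally force the $q+1$ absolute points (each of degree $q$) into $S$ to save a further $q+1$ edges, but this refinement is not needed to reach the claimed bound.

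The step requiring the most care is the bookkeeping of the error terms. Because the density $e(H)$ needed to produce the coefficient $\frac32$ on the linear term is exactly $6q$, the integer $m$ must be chosen to nearly saturate the vertex budget $2q+3$ while keeping every discrepancy in Theorem~\ref{dense subgraph} at the $O(q^{1/2})$ scale; the main thing to verify is that the choice $m = 2\sqrt{q}+O(1)$ introduces no error larger than $O(q^{1/2})$ in either $2\binom{m}{2}$ or $\frac{m^4}{8q}$, so that the final constant comes out correctly.
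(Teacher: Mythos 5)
Your proposal is correct and follows essentially the same route as the paper: choose $m$ maximal with $m+\binom{m}{2}\le 2q+3$, apply Theorem \ref{dense subgraph} to get a subgraph of $ER_q$ on at most $2q+3$ vertices spanning $6q-O(q^{1/2})$ edges, pad it to a deletion set of exactly $2q+3$ vertices, and bound the destroyed edges by $(q+1)|X|-e(X)$. The arithmetic and error-term bookkeeping match the paper's proof, so there is nothing further to add.
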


Corollary \ref{abl conjecture} also improves the main result of \cite{tt}.  
In Section \ref{preliminaries} we give some necessary background on projective planes and polarity graphs. We prove 
Theorem \ref{dense subgraph} and Corollary \ref{abl conjecture} in Section \ref{proofs}.  We finish with some concluding remarks in Section \ref{conclusion}.


\section{Preliminaries}\label{preliminaries}

Let $\Pi = ( \mathcal{P} , \mathcal{L} , \mathcal{I} )$ be a finite projective plane of order $q$.  A {\em $k$-arc} is a set of $k$ points 
in $\Pi$ such that no three of the points are collinear.  It is known that $k \leq q +1$ when $q$ is odd, and $k \leq q + 2$ when $q$ is even.  A line $l \in \mathcal{L}$ is called {\em exterior}, {\em tangent}, or {\em secant} if it intersects the $k$-arc in 0, 1, or 2 points, respectively.  A $k$-arc has exactly $\binom{q}{2} + \binom{q + 2 - k }{2}$ exterior lines, $k ( q+ 2 - k )$ tangents, and $\binom{k}{2}$ secants (see \cite{demb}, page 147).  A $(q+1)$-arc is called an {\em oval} and in the plane $PG(2,q)$, ovals always exist (see \cite{demb}, Ch 1).  The next lemma is known.  A short proof is included for completeness.  

\begin{lemma}\label{eigenvalues of polarity graph}
Let $G$ be a polarity graph obtained from a projective plane of order $q$.  If $A$ is the adjacency matrix of $G$, then the eigenvalues of $A$ are $q+1$ and $\pm \sqrt{q}$.
\end{lemma}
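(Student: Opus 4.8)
The plan is to pin down the spectrum by computing $A^2$ explicitly and showing that $A^2 = qI + J$, where $I$ is the identity and $J$ is the all-ones matrix; the three claimed eigenvalues then fall out immediately from the spectrum of $J$. Throughout I index the rows and columns of $A$ by points $x, z \in \points$, so that $A_{xz} = 1$ exactly when $(x, \pi(z)) \in \incidence$.

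First I would record two structural facts about $A$. Since $\pi$ is an involution that respects incidence, $(x, \pi(z)) \in \incidence$ holds if and only if $(z, \pi(x)) \in \incidence$, so $A$ is symmetric. Moreover, fixing a point $x$ and letting $y$ range over all of $\points$, the image $\pi(y)$ ranges over all of $\lines$ (as $\pi$ is a bijection), so the number of $y$ with $(x, \pi(y)) \in \incidence$ equals the number of lines through $x$, namely $q+1$. Hence every row sum of $A$ equals $q+1$, i.e. $A \mathbf{1} = (q+1)\mathbf{1}$, exhibiting $q+1$ as an eigenvalue with the all-ones eigenvector $\mathbf{1}$.

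Next I would compute the $(x,z)$ entry of $A^2$. Writing $(A^2)_{xz} = \sum_{y} A_{xy} A_{yz}$ and applying the involution identity above, this counts the points $y \in \points$ incident to both lines $\pi(x)$ and $\pi(z)$. When $x \neq z$ the lines $\pi(x), \pi(z)$ are distinct (bijectivity of $\pi$) and meet in exactly one point of the plane, so $(A^2)_{xz} = 1$; when $x = z$ the count is the number of points on the single line $\pi(x)$, which is $q+1$. Therefore $A^2 = qI + J$.

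Finally, since $A$ is symmetric and $A\mathbf{1} = (q+1)\mathbf{1}$, it preserves the orthogonal complement $\mathbf{1}^{\perp}$; on that subspace $J = 0$, so $A^2 = qI$ there, and every eigenvalue $\lambda$ of $A$ restricted to $\mathbf{1}^{\perp}$ satisfies $\lambda^2 = q$, giving $\lambda = \pm\sqrt{q}$. Together with the eigenvalue $q+1$ coming from $\mathbf{1}$, this shows the eigenvalues of $A$ are $q+1$ and $\pm\sqrt{q}$, as claimed. The one step carrying all the weight is the identity $A^2 = qI + J$: everything hinges on correctly translating the adjacency relation through the polarity into a statement about concurrence of lines, after which the projective-plane axioms (two distinct lines meet in exactly one point, and each line carries $q+1$ points) finish the computation. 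If the multiplicities are also wanted, they are forced by $\mathrm{tr}(A) = \#\{\text{absolute points}\}$ together with $\mathrm{tr}(A^2) = \mathrm{tr}(qI+J)$, but this is not needed for the statement.
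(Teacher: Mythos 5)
Your proof is correct and follows essentially the same route as the paper: both hinge on establishing $A^2 = qI + J$ from the projective-plane axioms (you count points common to the two lines $\pi(x),\pi(z)$, the paper dually counts paths of length $2$ via the unique line through two points) and then read off the eigenvalues. Your final step, restricting to $\mathbf{1}^{\perp}$ to extract $\pm\sqrt{q}$, is in fact slightly more explicit than the paper's, which leaves that deduction to the reader.
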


\begin{proof}
In a projective plane, every pair of points is contained in a unique line. Therefore, in a polarity graph, there is a unique path of length $2$ between any pair of vertices (this path may include a loop). This means that $(A^2)_{ij} = 1$ whenever $i\not=j$. Since any point is on exactly $q+1$ lines, every vertex of $G$ has degree exactly $q+1$ where loops add 1 to the degree of a vertex. The diagonal entries of $A^2$ are all $q+1$ thus,
\[
A^2 = J + qI.
\]
The eigenvalues of $J+qI$ are $(q+1)^2$ with multiplicity 1, and $q$ with multiplicity $q^2 + q$.  
\end{proof}

We remark here that the multiplicity of $q+1$ is $1$ and the multiplicities of $\pm \sqrt{q}$ are such that the sum of the eigenvalues is the trace of $A$, which is the number of absolute points of $G$. This implies that given two polarity graphs from projective planes of order $q$, if they have the same number of absolute points, then they are cospectral. Since not all polarity graphs with the same number of absolute points are isomorphic, this gives examples of graphs that are not determined by their spectrum, which may be of independent interest. For more information about determining graphs by their spectrum, see \cite{haemers}.
\medskip

The next result is a consequence of Lemma \ref{eigenvalues of polarity graph} and the so-called Expander Mixing Lemma.
We provide a proof which uses some basic ideas from linear algebra.    

\begin{lemma}\label{expander mixing lemma for ERq}
Let $G$ be a polarity graph of a projective plane of order $q$, and let $S$ be a subset of $V(G)$.
Let $e(S)$ denote the number of edges in $S$, possibly including loops.Then 
\[
e(S)\geq \frac{(q+1)|S|^2}{2(q^2+q+1)} - \frac{\sqrt{q}|S|}{2}.
\]
\end{lemma}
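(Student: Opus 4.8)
The plan is to prove the bound via the quadratic form argument that underlies the Expander Mixing Lemma, feeding in the spectrum computed in Lemma \ref{eigenvalues of polarity graph}. Let $A$ be the adjacency matrix of $G$ (with $A_{ii} = 1$ precisely at the absolute points, so that $A^2 = J + qI$ as in the previous proof), let $n = q^2 + q + 1 = |V(G)|$, and let $\chi_S \in \rn^n$ be the characteristic vector of $S$. Because $G$ is $(q+1)$-regular, the all-ones vector $\mathbf{1}$ is an eigenvector of $A$ with eigenvalue $q+1$, and by Lemma \ref{eigenvalues of polarity graph} every eigenvector orthogonal to $\mathbf{1}$ has eigenvalue in $\{\pm\sqrt{q}\}$, hence of absolute value $\sqrt{q}$.

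First I would relate the quadratic form $\chi_S^{\top} A \chi_S = \sum_{i,j \in S} A_{ij}$ to $e(S)$. The off-diagonal terms count each non-loop edge inside $S$ twice, while each loop inside $S$ contributes $1$ from the corresponding diagonal entry. Writing $\ell(S)$ for the number of loops inside $S$ and recalling that $e(S)$ counts edges including loops, this gives $\chi_S^{\top} A \chi_S = 2e(S) - \ell(S) \le 2e(S)$. The inequality points in exactly the direction we need, so it suffices to bound $\chi_S^{\top} A \chi_S$ from below; the loops only help.

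Next I would split $\chi_S$ along the top eigenvector and its orthogonal complement. Writing $\chi_S = \frac{|S|}{n}\mathbf{1} + w$ with $w \perp \mathbf{1}$, orthogonality annihilates the cross terms, so
\[
\chi_S^{\top} A \chi_S = (q+1)\frac{|S|^2}{n} + w^{\top} A w.
\]
The first term comes from $A\mathbf{1} = (q+1)\mathbf{1}$ together with $\left\| \frac{|S|}{n}\mathbf{1} \right\|^2 = |S|^2/n$. For the second term, since $w$ lies in the span of the eigenspaces for $\pm\sqrt{q}$, we have $w^{\top} A w \ge -\sqrt{q}\,\|w\|^2$, and $\|w\|^2 = \|\chi_S\|^2 - \frac{|S|^2}{n} = |S| - \frac{|S|^2}{n} \le |S|$. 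Combining these estimates yields $\chi_S^{\top} A \chi_S \ge (q+1)|S|^2/n - \sqrt{q}\,|S|$, and dividing by $2$ (using $e(S) \ge \frac12 \chi_S^{\top} A \chi_S$) gives the claimed inequality.

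I do not expect a serious obstacle: the spectral input is already packaged by Lemma \ref{eigenvalues of polarity graph}, and the decomposition is routine linear algebra. The one point requiring care is the bookkeeping with loops, where one must confirm that counting $e(S)$ with loops, under the convention $A_{ii} = 1$ at absolute points, produces $\chi_S^{\top} A \chi_S \le 2e(S)$ rather than the reverse, so that the eigenvalue bound indeed yields a valid \emph{lower} bound on $e(S)$.
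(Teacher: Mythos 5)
Your proof is correct, and it is essentially the paper's argument: the same quadratic-form identity $\chi_S^{\top}A\chi_S = 2e(S) - \ell(S)$, the same spectral input from Lemma \ref{eigenvalues of polarity graph}, and the same loop bookkeeping. The only cosmetic difference is that you split $\chi_S$ into its projection onto $\mathbf{1}$ plus an orthogonal remainder $w$, whereas the paper writes out the full orthonormal eigenbasis; the two decompositions yield identical estimates.
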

\begin{proof}
Let $A$ be the adjacency matrix of $G$ and let $n=q^2+q+1$. Let $\{x_i\}$ be an orthonormal set of eigenvectors of $A$. Since $A$ has constant row sum, $x_1 = \frac{1}{\sqrt{n}} \mathbf{1}$ and $\lambda_1 = q+1$. By Lemma \ref{eigenvalues of polarity graph}, the other eigenvalues of $A$ are all $\pm \sqrt{q}$.
\medskip

Now let $S$ be a subset of $V(G)$ and let $\mathbf{1}_S$ be the characteristic vector for $S$. Let $\hat{e}(S)$ denote the number of non-loop edges of $S$ and $l(S)$ denote the number of loops in $S$. Then
\begin{equation}\label{sum over S}
\mathbf{1}_S^T A \mathbf{1}_S = \sum_{i,j\in S} A_{ij} = 2\hat{e}(S) + l(S).
\end{equation}
Next we give a spectral decomposition of $\mathbf{1}_S$:
\[
\mathbf{1}_S = \sum_{i=1}^n \langle \mathbf{1}_S,x_i\rangle x_i.
\]
Noting that $\langle \mathbf{1}_S, x_1\rangle = \frac{|S|}{\sqrt{n}}$ and expanding \eqref{sum over S}, we see that
\[
2\hat{e}(S) + l(S) = \sum_{i=1}^n \langle \mathbf{1}_S, x_i\rangle^2 \lambda_i = \frac{(q+1)|S|^2}{n} + \sum_{i=2}^n \langle \mathbf{1}_S, x_i\rangle^2 \lambda_i.
\]
Therefore,
\[
\left|2 \hat{e}(S) + l(S) - \frac{(q+1)|S|^2}{n} \right| \leq \sum_{i=2}^n \left|\langle \mathbf{1}_S, x_i\rangle^2 \lambda_i \right| \leq \sqrt{q} \sum_{i=2}^n \langle \mathbf{1}_S, x_i\rangle^2 \leq \sqrt{q} |S|.
\]
Since $e(S) = \hat{e}(S) + l(S)$ and $l(S)\geq 0$, rearranging gives the result.
\end{proof}

\medskip

Note that Lemma \ref{expander mixing lemma for ERq} does not give us any information when $|S| = O( q)$.  Lemma \ref{expander mixing lemma for ERq} is not strong enough for our purposes in terms of proving Corollary \ref{abl conjecture}.


\section{Proof of Theorem \ref{dense subgraph} and Corollary \ref{abl conjecture}}\label{proofs}

In this section we prove Theorem \ref{dense subgraph} and Corollary \ref{abl conjecture}.  

\medskip

\begin{proof}[Proof of Theorem \ref{dense subgraph}]
Let $\Pi$ be a finite projective plane of order $q$ that contains an oval $H$.  Let $\pi$ be a polarity of $\Pi$ and let $G$ be the corresponding polarity graph.  We omit the subscript $\pi$ for notational convenience.  For $v \in V(G)$, write $\Gamma (v)$ for the set of neighbors of $v$ in $G$.  Given $S \subset H$, let 
\[
Y_S = \{ v \in V(G) : | \Gamma (v) \cap S | = 2 \}
\]
and $X_S = Y_S \backslash S$.  Since $H$ is an oval, the number of secants to $H$ is $\binom{q+1}{2}$.  
Thus, for any pair of distinct vertices $s_i , s_j \in H$, there is a unique vertex $t_{i,j} \in Y_H$ such that 
$t_{i,j}$ is adjacent to both $s_i$ and $s_j$.  The vertex $t_{i,j}$ corresponds to the unique secant that intersects $H$ at 
$s_i$ and $s_j$.  Further, the only neighbors of $t_{i,j}$ in $H$ are $s_i$ and $s_j$ and so  
\[
|Y_S | = \binom{ |S| }{2}
\] 
for any $S \subset H$.  This implies 
\begin{equation}\label{2:eq0}
|S| + |X_S| \geq |Y_S| = \binom{ |S| }{2}.
\end{equation}
When $S = H$, we get that $|X_H| \geq \binom{q+1}{2}  - (q  + 1)$ so by Lemma \ref{expander mixing lemma for ERq}, 
\begin{equation}\label{22:eq1}
e( G[ X_H] ) \geq \frac{q^3}{8} - O (q^{5/2}).  
\end{equation}
Let $m \in \{ 1,2 , \dots ,  q + 1 \}$.  Choose $S \subset H$ uniformly at random from the set of all subsets of $H$ of size $m$.  If 
$e(S , X_S)$ is the number of edges with one endpoint in $S$ and the other in $X_S$, then using (\ref{2:eq0}), 
\begin{equation}\label{2:eq1}
e(S , X_S) = 2 |X_S| \geq 2 \binom{ |S| }{2} - 2 |S| = 2 \binom{m}{2} - 2m.
\end{equation}
If $e = uv \in E( G[X_H] )$, then the at most four vertices in $( \Gamma (u) \cap H ) \cup ( \Gamma (v) \cap H )$ must be chosen in $S$ in order to have $e \in E( G [ X_S] )$.  Therefore, 
\[
\mathbb{P} ( e \in E (G[  X_S ] ) ) \geq \frac{ \binom{q-3}{m-4} }{ \binom{q+1}{m}} = \frac{ m ( m - 1) ( m - 2)( m - 3) }{ (q + 1) q ( q - 1) ( q- 2) } \geq \frac{ m ( m -1) ( m -2) ( m - 3) }{q^4}.
\]
By (\ref{22:eq1}) and linearity of expectation, 
\begin{equation}\label{2:eq2}
\mathbb{E} ( e (G [ X_S] ) ) \geq \frac{ m ( m - 1)(m-2)(m-3) }{8q} - O \left( \frac{m^4}{q^{3/2}} \right).
\end{equation}
Combining (\ref{2:eq1}) and (\ref{2:eq2}), we see that there is a choice of $S \subset H$ with $|S| = m$ and 
\[
e ( G[ S \cup X_S ] ) \geq 2 \binom{ m }{2} + \frac{ m ( m - 1)(m-2)(m-3) }{8q} - O \left( \frac{m^4}{q^{3/2}}+m \right).
\]
Lastly, observe $|S \cup X_S| \leq |S| + |Y_S| = m + \binom{m}{2}$.  
\end{proof}

\medskip

Now we use Theorem \ref{dense subgraph} to prove Corollary \ref{abl conjecture}.  We first prove a simple inequality that
expresses the number of edges of an induced subgraph of a polarity graph in terms of the removed set of vertices.

Let $G$ be a polarity graph of a projective plane of order $q$ and let $X\subset V(G)$. 
The number of edges in the graph $G \setminus X$ is 
\[
e(G) - e(X) - e(X, X^c)
\]
where $e(X)$ includes counting loops in $G$.  
Since
\[
e(X) + e(X, X^c) = \sum_{x\in X} d(x) - e(X) \leq (q+1)|X| - e(X),
\]
we have
\begin{equation}\label{2:eq3}
e(G \setminus X ) \geq e(G) - (q + 1) |X| + e(X).
\end{equation}

\medskip

\begin{proof}[Proof of Corollary \ref{abl conjecture}]
Let $q$ be a prime power and $ER_q$ be the Erd\H{o}s-R\'{e}nyi orthogonal polarity graph.  It is known that this graph has 
$\frac{1}{2}q(q+1)^2$ edges.  Let $m$ be the largest integer satisfying $m+\binom{m}{2} \leq 2q+3$. 
Then $m = \lfloor \sqrt{4q + 25/4}  - 1/2 \rfloor$ and 
\[
2 \binom{m}{2} + \frac{ m (m - 1)(m-2)(m-3) }{q^4} = 6q - O(q^{1/2} ).
\]
By Theorem \ref{dense subgraph}, there is a set $S \subset V(ER_q)$ with $|S| = m + \binom{m}{2}$ such that 
$S$ induces a subgraph with at least $6q - O (q^{1/2})$ edges.  Let $X = S \cup S'$ where $S'$ is an arbitrarily chosen set of 
$2q + 3 - |S|$ vertices disjoint from $S$.  Then by (\ref{2:eq3}),
\[
e( ER_q \setminus X) \geq \frac{1}{2}q(q+1)^3 - (q+1)(2q +3) + 6q - O(q^{1/2} ) =  \frac{1}{2}q^3 - q^2 + \frac{3}{2}q - O\left(q^{1/2}\right).
\]
Since $ER_q$ is $C_4$-free, we have 
\[
\textup{ex}( q^2 - q - 2 , C_4)  \geq \frac{1}{2}q^3 - q^2 + \frac{3}{2}q - O\left(q^{1/2}\right).
\]
\end{proof}


\section{Concluding remarks}\label{conclusion}

There are two special circumstances in which one can improve Theorem \ref{dense subgraph}. Each indicates the difficulty of finding exact values for the parameter $\mathrm{ex}(n, C_4)$.

\begin{itemize}
\item The first situation is when $q$ is a square.  In this case, $\mathbb{F}_{q}$ contains the subfield $\mathbb{F}_{\sqrt{q}}$ and this subfield may be used to find small graphs that contain many edges.  For instance $ER_{q}$ contains a subgraph $F$ that is isomorphic to $ER_{ \sqrt{q} }$.
One can choose $m = \sqrt{q} + 1$ and let $S$ be the set of absolute points in $F$.  These $m$ vertices will also be absolute points in $ER_q$ and thus are contained in an oval (the absolute points of an orthogonal polarity of $PG(2,q)$ form an oval when $q$ is odd).  If we then consider the $\binom{m}{2}$ vertices in $Y_S$, these will be the vertices in $F$ that are adjacent to the absolute points of $F$.  The set $Y_S$ induces a $\frac{1}{2}(\sqrt{q} -1)$-regular graph in $F$ (see \cite{bs}).  
The set $X = S \cup Y_S$ will span roughly $\frac{q^{3/2}}{8}$ edges which is much larger than the linear in $q$ lower bound provided by Theorem \ref{dense subgraph} when $m  = \sqrt{q} + 1$.  

\item The second situation is when $q$ is a power of 2 and $q  - 1$ is prime.   
Assume that this is the case and consider $ER_{q-1}$.  Let $F$ be a subgraph of 
$ER_{q-1}$ obtained by deleting three vertices of degree $q-1$.  The number of vertices of $F$ is 
$(q - 1)^2 + (q - 1) + 1 - 3 = q^2 - q - 2$, and the number of edges of $F$ is at least 
$\frac{1}{2} (q - 1) q^2 - 3(q  - 1)  = \frac{1}{2}q^3 - \frac{1}{2}q^2 - 3q + 3$.  This is larger than $\frac{1}{2}q^3 - q^2$ whenever $q \geq 5$.  A prime of the form $2^m-1$ with $m \in \mathbb{N}$ is known as a Mersenne Prime.  It is an open problem to determine if there are infinitely many Mersenne Primes.  It has been conjectured that there are infinitely many such primes.  
\end{itemize}

In \cite{tt}, Sidon sets are used to construct $C_4$-free graphs.  For a prime power $q$, these graphs have $q^2-1$ vertices, and $\frac{1}{2}q^3 - q + \frac{1}{2}$ edges when $q$ is odd, and $\frac{1}{2}q^3 - q$ edges when $q$ is even.  These graphs have a degree sequence similar to the degree sequence of an orthgonal polarity graph and it seemed possible that these graphs could be extremal.  However, Theorem \ref{dense subgraph} can be applied to show
\[
\mathrm{ex}(q^2-1, C_4) \geq \frac{1}{2}q^3 - O(\sqrt{q}),
\]

\noindent which shows that the graphs constructed in \cite{tt} are not extremal.  


\end{document}